\newtheorem{thm}{Theorem}
\newtheorem{prop}[thm]{Proposition}
\newtheorem{lem}[thm]{Lemma}
\newtheorem*{conj}{Conjecture}
\theoremstyle{remark}
\newtheorem{rem}[thm]{Remark}
\newcommand{\Z}{\mathbb Z}
\newcommand{\Q}{\mathbb Q}
\newcommand{\F}{\mathbb F}
\newcommand{\C}{\mathbb C}
\newcommand{\q}{\mathfrak q}
\renewcommand{\div}{\operatorname{div}}
\newcommand{\ord}{\operatorname{ord}}
\newcommand{\norm}{\operatorname{N}}
\newcommand{\tr}{\operatorname{Tr}}
\begin{document}

\title{On torsion of superelliptic Jacobians}

%first author

\author{Wojciech Wawrów}
\address{Wojciech Wawrów\\
	Adam Mickiewicz University\\
	Faculty of Mathematics and Computer Science\\
	Uniwersytetu Poznańskiego 4\\
	61-614 Poznań, Poland\\\\
	Present address:\\
	London School of Geometry and Number Theory\\
	Department of Mathematics\\
	University College London\\
	Gower Street\\
	London, WC1E 6BT\\
	United Kingdom}
\email{wojtek.wawrow@gmail.com}
\urladdr{https://sites.google.com/view/wojtekwawrow}

\subjclass[2010]{Primary 14H40, Secondary 14G10, 14H45}

\keywords{Jacobian variety, superelliptic curves, Mordell-Weil group}

\thanks{I thank Prof. Wojciech Gajda for suggesting the topic, Bartosz Naskręcki for help with computational aspects of my research, and Jędrzej Garnek for providing many useful references. I also thank all three of them for many invaluable discussions. Further I would like to thank Sebastian Petersen for his comments on an older version of this paper, as well as Royce Peng for proof-reading the final version. Additional thanks to the anonymous referee for their valuable comments and for pointing me towards additional references.} 

%\begin{resume}
%	Nous démontrons un résultat décrivant la structure d'un sous-groupe de $m$-torsion de la jacobienne d'une courbe superelliptique générale de la forme $y^m=F(x)$, généralisant ainsi le théorème de structure pour la 2-torsion d'une courbe hyperelliptique. Nous étudions l'existence de point de torsion sur les courbes de la forme $y^q=x^p-x+a$ sur des corps finis de caractéristique $p$. Nous appliquons ces résultats afin de borner par le bas le rank de Mordell-Weil de jacobiennes de certaines courbes superelliptiques sur $\mathbb Q$.
%\end{resume}

\begin{abstract}
	We prove a result describing the structure of a specific subgroup of the $m$-torsion of the Jacobian of a general superelliptic curve $y^m=F(x)$, generalizing the structure theorem for the $2$-torsion of a hyperelliptic curve. We study existence of torsion on curves of the form $y^q=x^p-x+a$ over finite fields of characteristic $p$. We apply those results to bound from below the Mordell-Weil ranks of Jacobians of certain superelliptic curves over $\Q$.
\end{abstract}

\maketitle

\section{Introduction}

Our objects of study are \emph{superelliptic curves}, defined by equations of the form
\begin{align*}
	C:y^m=F(x)
\end{align*}
for separable polynomials $F$ and $m\geq 2$ not divisible by the characteristic of the base field. This family generalizes hyperelliptic curves, which are curves of the above form for $m=2,\,\deg F>4$, as well as Picard curves, which are the case $m=3,\,\deg F=4$.

We are specifically interested in the points of the form $(\alpha,0)$, where $\alpha$ is a root of $F$. The line $x=\alpha$ intersects $C$ at this point with multiplicity $m$. This lets us find certain divisor classes on the Jacobian $J(C)$, formed by such points and the points at infinity, which are $m$-torsion.

In particular, suppose that $F$ factors as $(x-\alpha_1)\dots(x-\alpha_r)$ in $K$. Consider the group $\Delta$ consisting of classes of divisors of the form
\begin{align*}
	\sum_{i=1}^ra_iR_i-\frac{1}{d}\left(\sum_{i=1}^ra_i\right)\infty,
\end{align*}
where $d=\gcd(m,r)$, $a_i$ are integers whose sum is divisible by $d$, $R_i$ is the point $(\alpha_i,0)$, and $\infty$ is the formal sum of points at infinity of $C$. It is easy to see those classes are $m$-torsion in $J(C)$.

It appears to be a folklore result that for $m=2$, $\Delta$ is the entire $2$-torsion subgroup of $J(C)$. In particular, it is isomorphic to $(\Z/2\Z)^{2g}$, where $g$ is the genus of $C$, equal to either $\frac{r-1}{2}$ or $\frac{r-2}{2}$ according to the parity of $r$. This statement follows for instance from the results of \cite{PS97}, discussed below, when specialized to the case $m=2$.

For $m>2$, those points cannot form all of $m$-torsion for cardinality reasons. For instance, when $\gcd(m,r)=1$, $\Delta$ has order at most $m^{r-1}$, while the theory of abelian varieties tells us that over the algebraic closure of the base field the $m$-torsion consists of $m^{2g}$ points, and $2g>r-1$ as soon as $m>2$.

Describing the entire $m$-torsion of a superelliptic Jacobian is an interesting problem. We provide a result in this direction, showing that $\Delta$ always has the maximal possible size, subject to some obvious relations its points satisfy. Specifically, we have
\begin{prop}
	\label{prop-torsion}
	$\Delta$ is a subgroup of $J(C)$ isomorphic to $(\Z/m\Z)^{r-2}\times\left(\Z/\frac{m}{d}\Z\right)$.
\end{prop}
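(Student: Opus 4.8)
The plan is to realise $\Delta$ as the image of an explicit homomorphism from a lattice and then to compute its kernel. Let $L=\{(a_1,\dots,a_r)\in\Z^r: d\mid\sum_i a_i\}$ and define $\phi\colon L\to J(C)$ by $\phi(a)=\big[\sum_i a_iR_i-\tfrac1d(\sum_i a_i)\infty\big]$, so that $\Delta=\phi(L)$ by construction. The first step is to write down the obvious elements of $\ker\phi$. Because $R_i$ is totally ramified over $\alpha_i$ and $x$ has a pole of order $m/d$ at each of the $d$ points at infinity, the function $x-\alpha_i$ has divisor $mR_i-\tfrac md\infty$; hence $me_i\in\ker\phi$. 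Likewise $\div(y)=\sum_i R_i-\tfrac rd\infty$, so the all-ones vector $\mathbf 1\in\ker\phi$. Set $R_0=\langle me_1,\dots,me_r,\mathbf 1\rangle\subseteq\ker\phi$.

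Second, I would compute $L/R_0$ by integral linear algebra, which already yields the upper bound. Changing the basis of $\Z^r$ to $f_1=\mathbf 1$, $f_i=e_i$ for $i\ge2$, one checks that $me_1$ becomes redundant, so $\Z^r/R_0\cong(\Z/m\Z)^{r-1}$ with coordinates $f_2,\dots,f_r$. The reduction map $\sigma(a)=\sum_i a_i\bmod d$ kills $R_0$ (since $d\mid r$ and $d\mid m$), hence descends to $\bar\sigma(b_2,\dots,b_r)=\sum_{i\ge2}b_i\bmod d$, and $L/R_0=\ker\bar\sigma$. A final unimodular change of coordinates replacing one $b_i$ by the sum $s=\sum_{i\ge2}b_i$ exhibits $\ker\bar\sigma\cong(d\Z/m\Z)\times(\Z/m\Z)^{r-2}\cong(\Z/m\Z)^{r-2}\times\Z/\tfrac md\Z$. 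In particular $\Delta$ is a quotient of this group, so $|\Delta|\le m^{r-1}/d$.

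The crux is the reverse inclusion $\ker\phi\subseteq R_0$, equivalently that $\phi$ induces an isomorphism $L/R_0\xrightarrow{\sim}\Delta$. Since the $me_i$ lie in $\ker\phi$, any class in $\ker\phi$ reduces modulo $R_0$ to a representative $a$ with $0\le a_i<m$ for all $i$, still lying in $L$, and it suffices to show that such an $a$ is forced to be a constant vector (hence a multiple of $\mathbf 1$, so in $R_0$). Suppose $\div(f)=\sum_i a_iR_i-\tfrac1d(\sum_i a_i)\infty$ with $0\le a_i<m$. Then $f$ is regular on the affine curve, so $f=\sum_{j=0}^{m-1}p_j(x)y^j$ with $p_j\in K[x]$; since $y$ is a uniformizer at $R_i$ and $x-\alpha_i$ vanishes there to order $m$, the vanishing order is $a_i=\min\{j:p_j(\alpha_i)\ne0\}$. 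The requirements that $f$ have no other affine zeros and the common prescribed pole order $\tfrac1d\sum_i a_i$ at each of the $d$ points at infinity should together force all $a_i$ to coincide.

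I anticipate this last step to be the main obstacle: controlling $f$ precisely enough to exclude unexpected relations. The tool I would use is the norm $N=\norm_{K(C)/K(x)}(f)$, which satisfies $\div_{\mathbb P^1}(N)=\pi_*\div(f)$ and hence equals $c\prod_i(x-\alpha_i)^{a_i}$ up to a constant, pinning down its degree and its only roots. Combining this with the expansion $f=\sum_j p_j(x)y^j$ and analysing the pole orders $\tfrac1d(m\deg p_j+rj)$ contributed by the individual monomials at the points at infinity — ruling out cancellations by exploiting the action of $\mu_m$ that permutes those points — should establish injectivity and complete the identification of $\Delta$. A dimension count via Riemann--Roch for the relevant linear systems $\ell(\tfrac kd\infty)$ is an alternative route to the same lower bound.
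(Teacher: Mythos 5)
Your reduction to the key claim is correct and your lattice computation of $L/R_0$ is fine, but the genuine content of the proposition is exactly the step you flag as ``the main obstacle,'' and as written you have not proved it. The norm computation $\norm_{K(C)/K(x)}(f)=c\prod_i(x-\alpha_i)^{a_i}$ and the list of pole orders $\tfrac1d(m\deg p_j+rj)$ do not by themselves close the argument: when $d>1$, the monomials $p_j(x)y^j$ with $j\equiv j'\pmod{m/d}$ have congruent pole orders at the points at infinity, so cancellation there is a real possibility, and the natural chain of inequalities relating $\sum_i a_i$ to the degrees of the $p_j$ does not force equality. So the proposal, as it stands, has a gap precisely at the crux.

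The $\mu_m$-action you mention in passing closes the gap far more efficiently than pole-order bookkeeping. If $\div(f)=\sum_ia_iR_i-\tfrac1d\left(\sum_ia_i\right)\infty$, this divisor is invariant under every automorphism $\sigma_\zeta\colon(x,y)\mapsto(x,\zeta y)$ (each $R_i$ is fixed and $\infty$ is preserved as a set), so $f\circ\sigma_\zeta=c_\zeta f$ with $\zeta\mapsto c_\zeta$ a character of $\mu_m$; comparing with $f=\sum_jp_j(x)y^j$ forces $p_j=0$ for all but one index $j_1$, i.e.\ $f=p_{j_1}(x)y^{j_1}$. Since the affine zeros of $f$ are only the $R_i$, each with multiplicity $a_i<m$, the polynomial $p_{j_1}$ can have no roots, hence is constant, and all $a_i=j_1$. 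With this insertion your argument becomes a complete proof, and one genuinely different from the paper's: the paper instead exhibits an explicit monomial basis of the Riemann--Roch space $L(E)$ for the auxiliary divisor $E=-\infty+\sum_{i=1}^{r-1}(m-1)R_i$ and derives a contradiction from the fact that every basis element vanishes at $R_r$. Your route is more elementary (no Riemann--Roch needed) but hinges entirely on the eigenfunction observation you only hint at.
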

This result, which does not seem to have appeared in this form in the literature before, can be seen as a generalization of the above mentioned structure theorem for hyperelliptic curves. If $m$ is a prime number the above proposition can be deduced from \cite[Proposition 6.2]{PS97}, which furthermore describes the resulting subgroup as the kernel of a certain endomorphism on the Jacobian of the curve.

We can use the proposition above to find families of curves whose Jacobians have high Mordell-Weil ranks over number fields. As a sample application, we show the following.
\begin{thm}
	\label{thm-app}
	Suppose $p$ is an odd prime, $q$ is a prime factor of $p-1$ and $k$ is an integer divisible by some prime larger than $p$ but not by $p$. Then the Jacobian of the curve
	\begin{align*}
		y^q=x(x-1)\dots(x-(p-1))+k^q
	\end{align*}
	has the Mordell-Weil rank at least $p-1$ over $\Q$.
\end{thm}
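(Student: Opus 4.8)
The plan is to exhibit $p-1$ explicit independent points in $J(C)(\Q)$. First I would note that for each $j\in\{0,1,\dots,p-1\}$ the product $j(j-1)\cdots(j-(p-1))$ vanishes, so $P_j=(j,k)$ is a rational point of $C$, and set $D_j=[P_j-P_0]\in J(C)(\Q)$ for $j=1,\dots,p-1$. These $p-1$ classes already absorb the obvious relation $\sum_{j}[P_j-\infty]=0$ coming from $\div(y-k)$, whose zero divisor is exactly $\sum_{j=0}^{p-1}P_j$. It then suffices to prove that $D_1,\dots,D_{p-1}$ are $\Z$-linearly independent modulo torsion, and for this I would reduce at two carefully chosen primes.

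Writing $\prod_{j=0}^{p-1}(x-j)\equiv x^p-x\pmod p$, the polynomial $F$ reduces mod $p$ to $x^p-x+k^q$, which is separable since its derivative is $-1$ and $k\not\equiv 0\pmod p$; as $q\mid p-1$ forces $q<p$, the curve $C$ has good reduction at $p$, with special fibre the Artin--Schreier-type curve $C_0:y^q=x^p-x+k^q$ over $\F_p$ studied earlier in the paper. Choosing the prime $\ell>p$ dividing $k$, we have $k\equiv 0\pmod\ell$ while the residues $0,\dots,p-1$ remain distinct mod $\ell$, so $F\equiv x(x-1)\cdots(x-(p-1))\pmod\ell$ is separable and $C$ also has good reduction at $\ell$, with special fibre $C_\ell:y^q=\prod_{j=0}^{p-1}(x-j)$.

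The reduction mod $\ell$ is the one connecting to the torsion structure. Since $k\equiv 0\pmod\ell$, the point $P_j$ reduces to one of the root points $R_j=(j,0)$, so $\phi_\ell(D_j)=[R_j-R_0]$ lies in the group $\Delta$ attached to $C_\ell$. Here $m=q$, $r=p$ and $d=\gcd(q,p)=1$, so Proposition \ref{prop-torsion} gives $\Delta\cong(\Z/q\Z)^{p-1}$, and one checks that the classes $[R_j-R_0]$ for $j=1,\dots,p-1$ form a basis, each of exact order $q$. Hence any relation among the $\phi_\ell(D_j)$ inside $J(C_\ell)(\F_\ell)$ forces all its coefficients to be divisible by $q$.

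Finally I would run a descent exploiting the incompatibility between the two reductions. Suppose $S=\sum a_jD_j$ is torsion. Since reduction mod $p$ is injective on prime-to-$p$ torsion and $q\neq p$, the $q$-primary part of $S$ injects into $J(C_0)(\F_p)$; the main technical input, supplied by the analysis of $y^q=x^p-x+a$ over $\F_p$, is that the subgroup generated by the classes $[(j,k)-(0,k)]$ has order prime to $q$ (it is governed by the order-$p$ translation $(x,y)\mapsto(x+1,y)$ and is a $p$-group), so $S$ has trivial $q$-part. Then $\phi_\ell(S)\in(\Z/q\Z)^{p-1}$ has order prime to $q$ and is therefore $0$, and the basis property above yields $q\mid a_j$ for all $j$. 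Writing $a_j=qa_j'$ gives $S=qS'$ with $S'$ again torsion, and iterating forces every $a_j=0$. Thus the $D_j$ are independent modulo torsion and the rank is at least $p-1$. I expect the hard part to be precisely this mod-$p$ input — the assertion that the reductions $[(j,k)-(0,k)]$ carry no $q$-torsion; once that is in hand, the rest is bookkeeping around Proposition \ref{prop-torsion} and the standard injectivity of reduction on prime-to-$p$ torsion.
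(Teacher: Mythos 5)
Your argument follows the paper's two-prime strategy almost exactly: reduction modulo the prime $\ell>p$ dividing $k$ sends the $P_j$ to the ramification points, so Proposition \ref{prop-torsion} (via Remark \ref{rem-r1}) controls their images and drives a $q$-divisibility descent (the paper packages this as Proposition \ref{prop-application}, with a minimality argument where you iterate division by $q$ --- the same idea); reduction modulo $p$ turns the curve into $y^q=x^p-x+k^q$ and is used to rule out $q$-torsion. The verifications you leave implicit are fine: the classes $[R_j-R_0]$, $j=1,\dots,p-1$, really are a basis of $\Delta\cong(\Z/q\Z)^{p-1}$ because the transition matrix from the basis $R_i-\infty$ has determinant $\pm p$, which is prime to $q$; and injectivity of reduction on prime-to-$p$ torsion suffices since $q\neq p$.

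The one genuine flaw is your justification of the mod-$p$ input. The subgroup of $J(C_0)(\F_p)$ generated by the classes $[(j,k)-(0,k)]$ is in general \emph{not} a $p$-group, and the translation $\sigma:(x,y)\mapsto(x+1,y)$ cannot show that it is: since $C_0/\langle\sigma\rangle$ is rational, one gets $1+\sigma+\dots+\sigma^{p-1}=0$ on $J(C_0)$ and hence $(\sigma-1)^{p-1}=p\cdot(\text{unit})$, which only bounds the index of your subgroup inside the $\Z[\sigma]$-module generated by $[(0,k)-\infty]$ and says nothing about the order of that module. Concretely, for $p=3$, $q=2$, $a=1$ the curve $y^2=x^3-x+1$ has $J(\F_3)\cong\Z/7\Z$ and your subgroup is the whole group. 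What is true, and is exactly what your descent needs, is that $q\nmid|J(C_0)(\F_p)|$; this is Proposition \ref{thm-q-mid-p-1}, proved via the Gauss-sum computation of the zeta function, and it is the real content of the theorem (it is also where the hypothesis $q\mid p-1$ enters). With that citation in place of the $p$-group claim, your proof is complete and coincides with the paper's.
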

The proof of this result follows the methodology of \cite{JT99}, where we use the proposition as a substitute for results about hyperelliptic curves, along with some calculations using Gauss sums over finite fields, similar to ones presented in \cite[\S11]{IR90}.

One of the earliest works where Jacobians of high rank depending on the genus of curves are shown to exist is \cite{N56} where, among other things, N\'eron shows that for any $g$ there are infinitely many curves of genus $g$ with Mordell-Weil rank at least $3g+7$. This work is, however, ineffective. For $g=2$ those bounds have been improved and made effective by Shioda in \cite{S97} where families with Mordell-Weil rank at least $15$ are constructed using a trick introduced by Mestre in \cite{M91}.

The first completely effective result of this kind for curves of arbitrary genus appears be due to Coleman \cite{C85}, where the effective Chabauty method is applied to find families of hyperelliptic curves with Jacobians of high ranks. In \cite{JT99} Coleman's examples are improved, and using a different, more elementary approach the Jacobians are shown to have even higher ranks under additional assumptions. The present work further extends this approach to special families of superelliptic curves, culminating in the explicit examples of Theorem \ref{thm-app} above.

It should be noted that the results mentioned thus far require the genera of the curves, and hence the dimensions of their Jacobians, to grow unboundedly for the ranks to get arbitrarily large. It is a well-known open problem whether the ranks can be arbitrarily large for abelian varieties of a fixed dimension over a fixed number field. For the specific case of elliptic curves over $\Q$, the example with the highest known rank was found by Elkies \cite{Elk} and has rank at least $28$ (exactly $28$ assuming some standard conjectures, see \cite{KSW}). For a recent heuristic argument for boundedness and a brief history of the problem, see \cite{PPVM}.

All this is in stark contrast to the situation over function fields, where curves of arbitrarily high rank have been known for a long time, see \cite{TS67} for the first construction and \cite{U01} for the first construction with nonisotrivial curves. Ulmer has further treated related problems for Jacobians of hyperelliptic curves over function fields. For instance in \cite{U07} he provides examples of families of hyperelliptic Jacobians with arbitrarily high rank, and further verifies the Birch-Swinnerton-Dyer conjecture for them. See \cite{U11} and \cite{U14} for general surveys on elliptic curves and general Jacobians over function fields respectively, including constructions of families with high ranks.

\subsection{Structure of the paper}
Below we recall all the notation, definitions and basic facts used in the following sections. Section \ref{sec-torsion} is devoted to the proof of Proposition \ref{prop-torsion}. In Section \ref{sec-form} we look at the curves of the form $y^q=x^p-x+a$ over a field of characteristic $p$. We show under suitable conditions they have no $q$-torsion over $\F_p$ using methods similar to ones used in \cite{J14}, involving Gauss sums and Hasse-Weil zeta functions. Lastly, in Section \ref{sec-apps}, we use those results and methods based on those in \cite{JT99} to finish the proof of Theorem \ref{thm-app}.

\subsection{Notation and preliminaries}
Let $K$ be an arbitrary perfect field. We define a \emph{superelliptic curve} over $K$ to be a smooth projective model of an affine curve given by an equation of the form
\begin{align*}
	C:y^m=F(x),
\end{align*}
where $m\geq 2$ is an integer not divisible by the characteristic of $K$ and $F\in K[x]$ is separable, i.e. with no repeated factors over $\overline K$. We set $r=\deg F$ and $d=\gcd(m,r)$.

We observe that the affine curve above is smooth, therefore it can be identified with an open subset of its smooth projective model (see \cite[I.6]{H77}). We refer to the points not included in this open subset as the \emph{points at infinity}. Unless necessary, we shall not distinguish between the projective and the affine model of the curve.

Considering the function field of $C$, from \cite[Proposition 3.7.3]{S09} we can show its genus is equal to
\begin{align*}
	g=\frac{1}{2}\big((m-1)(r-1)-(d-1)\big)
\end{align*}
and it has exactly $d$ points at infinity over $\overline K$. We denote their formal sum by $\infty$, which is a divisor of degree $d$ defined over $K$.

We refer to \cite{HS00} for basic facts about Jacobians of curves. We shall identify degree zero divisors with their classes in the Jacobian. We denote the Jacobian of a curve $C$ by $J(C)$, and with $J(C)(K)$ we denote the group of its $K$-rational points.

\section{Proof of Proposition \texorpdfstring{\ref{prop-torsion}}{1}}
\label{sec-torsion}

We may assume that the base field $K$ is algebraically closed. We have the following equalities of divisors on $C$:
\begin{align*}
	\div(x-\alpha_i) &=mR_i-\frac{m}{d}\infty,\\
	\div(y) &=\sum_{i=1}^rR_i-\frac{r}{d}\infty,
\end{align*}
From there it is not hard to see that $\Delta$ is generated as a subgroup of $J(C)$ by the following points:
\begin{align*}
	D_i &=R_i-R_{r-1}\quad\text{for }i=1,\dots,r-2,\\
	D_{r-1} &=dR_{r-1}-\infty.
\end{align*}
and that those points satisfy equalities $mD_i=0$ for $i=1,\dots,r-2$ and $\frac{m}{d}D_{r-1}=0$ in $J(C)$. This gives a surjection from $(\Z/m\Z)^{r-2}\times\left(\Z/\frac{m}{d}\Z\right)$ to $\Delta$ given by
\begin{align*}
	(a_1,\dots,a_{r-1})\mapsto-\sum_{i=1}^{r-1}a_iD_i.
\end{align*}
We wish to show it is also injective.

If the kernel of this map is nontrivial, then, by adding suitable multiples of divisors $\div(x-\alpha_i)$, we can find a principal divisor of the form
\begin{align*}
	D=-\sum_{i=1}^{r-1}a_iR_i+\frac{1}{d}\left(\sum_{i=1}^{r-1}a_i\right)\infty
\end{align*}
with $0\leq a_i<m$ not all zero. We shall prove this is impossible.

Consider the auxiliary divisor
\begin{align*}
	E=-\infty+\sum_{i=1}^{r-1}(m-1)R_i.
\end{align*}
Observe that $\deg E=(r-1)(m-1)-d=2g-1$, therefore Riemann-Roch theorem gives us $\ell(E)=\deg(E)-g+1=g$, where, as usual, $\ell(E)$ is the dimension of the $K$-vector space $L(E)$ of functions $f\in K(C)$ satisfying $E+\div(f)\geq 0$. With this in mind, we can find an explicit basis of $L(E)$.

For any $0<i<r,0<j<m$ let $f_{ij}\in K(C)$ be given by
\begin{align*}
	f_{ij}=\frac{y^j}{\prod_{k\leq i}(x-\alpha_k)}.
\end{align*}
We have
\begin{align*}
	\div(f_{ij})=\sum_{k=1}^i(j-m)R_k+\sum_{k=i+1}^r jR_k+\frac{1}{d}(im-jr)\infty.
\end{align*}
It is clear that $f_{ij}\in L(E)$ iff $im-jr>0$. Let
\begin{align*}
	A &=\{(i,j):0<i<r,0<j<m,im-jr>0\},\\
	B &=\{(i,j):0<i<r,0<j<m,im-jr<0\}.
\end{align*}
The equation $im-jr=0$ has exactly $d-1$ solutions in the range $0<i<r,0<j<m$, which implies that the set $A\cup B$ has $(m-1)(r-1)-(d-1)=2g$ elements. Further, since $(r-i)m-(m-j)r=-(im-jr)$, the map $(i,j)\mapsto(r-i,m-j)$ gives a bijection from $A$ to $B$, showing $A$ has exactly $\frac{1}{2}|A\cup B|=g$ elements.

It follows that $\{f_{ij}:(i,j)\in A\}$ is a set of $g=\ell(E)$ elements of $L(E)$, so to show it is a basis it is enough to show their linear independence. Assume there exist $b_{ij}\in K$, not all zero, such that
\begin{align*}
	\sum_{(i,j)\in A}b_{ij}f_{ij}=0.
\end{align*}
Take the largest index $k$ such that $b_{kj}\neq 0$ for some $j$. Observe $f_{ij}$ for $i<k$ are all regular at $R_k$, while $f_{kj}$ has a pole of order $m-j$ at this point. Thus, letting $l$ be the least index such that $b_{kl}\neq 0$, the left-hand side above has a pole of order $m-l$ at $R_k$, so clearly is not zero. This contradiction establishes linear independence, and hence that the set above is a basis.

Consider again our divisor $D$. Suppose it is principal, that is, there is a nonzero $f\in K(C)$ such that $\div(f)=D$. We have
\begin{align*}
	E+\div(f)=E+D=\sum_{i=1}^{r-1}(m-1-a_i)R_i+\left(-1+\sum_{i=1}^{r-1}a_i\right)\infty\geq 0
\end{align*}
since the $a_i$ are all at most $m-1$ and their sum is positive. This means $f\in L(E)$, so that it can be written in terms of the basis we have found:
\begin{align*}
	f=\sum_{(i,j)\in A}c_{ij}f_{ij}
\end{align*}
with $c_{ij}\in K$. But each $f_{ij}$ has a zero at $R_r$, hence so does $f$, which is clearly not the case since the coefficient of $R_r$ in $D=\div(f)$ is zero. We conclude $D$ is not a principal divisor, as we wanted.\qed

\begin{rem}
	\label{rem-r1}
	When $d=1$ we have that $\infty$ is just a single point, and $\Delta$ is generated by the points $D_i'=R_i-\infty$ for $i=1,\dots,r-1$. The proposition then shows that an integer linear combination of those $D_i'$ is zero in $J(C)$ if and only if all of its coefficients are divisible by $m$.
\end{rem}

\section{Curves of the form \texorpdfstring{$y^q=x^p-x+a$}{y\^{}q=x\^{}p-x+a}}
\label{sec-form}

We move on to study superelliptic curves $C$ with equations of the form
\begin{align*}
	C:y^q=x^p-x+a
\end{align*}
over finite fields of characteristic $p$, where $p,q$ are distinct primes and $a\in\F_p^\times$. Observe that the polynomial on the right-hand side is always separable, since its derivative $-1$ doesn't vanish. We denote the Jacobian of $C$ by $J$.

Assume first $q\mid p-1$. We shall compute the zeta function of this curve using Gauss sums of additive and multiplicative characters.

For any additive character $\psi:\F_p\to\C^\times$ we define a character $\psi_n:\F_{p^n}\to\C^\times$ by
\begin{align*}
	\psi_n(\alpha)=\psi(\tr_{\F_{p^n}/\F_p}(\alpha)),
\end{align*}
where $\tr_{\F_{p^n}/\F_p}$ denotes the field-theoretic trace map from $\F_{p^n}$ to $\F_p$. Similarly, for a multiplicative character $\chi:\F_p\to\C$, we define $\chi_n:\F_{p^n}\to\C$ by
\begin{align*}
	\chi_n(\alpha)=\chi(\norm_{\F_{p^n}/\F_p}(\alpha)),
\end{align*}
where $\norm_{\F_{p^n}/\F_p}$ is the field-theoretic norm.
\begin{rem}
	We adopt the convention for multiplicative characters $\chi$ that $\chi(0)=0$ for nontrivial characters $\chi$, but $\chi(0)=1$ for $\chi$ the trivial character.
\end{rem}

Since $\gcd(p,q)=1$, $C$ has exactly one point at infinity, so we just need to count the points on its affine part.
\begin{lem}
	$C$ has exactly
	\begin{align*}
		\sum_{w-z=a}\sum_\psi\sum_\chi\psi_n(z)\chi_n(w)
	\end{align*}
	affine points over $\F_{p^n}$, where the first sum ranges over all pairs $w,z\in\F_{p^n}$ satisfying $w-z=a$, the second over all additive characters of $\F_p$ and the third one over multiplicative characters of $\F_p$ of order dividing $q$.
\end{lem}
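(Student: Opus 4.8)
The plan is to count the affine solutions $(x,y)\in\F_{p^n}^2$ of $y^q=x^p-x+a$ by organizing them according to the two intermediate quantities $w=y^q$ and $z=x^p-x$. For any such point one has $w-z=(x^p-x+a)-(x^p-x)=a$, and conversely each pair $(w,z)$ with $w-z=a$ is hit by exactly $\#\{y:y^q=w\}\cdot\#\{x:x^p-x=z\}$ points, since $w$ depends only on $y$ and $z$ only on $x$, so the fibre over $(w,z)$ is a product set. Hence the number of affine points equals
\[
\sum_{w-z=a}\#\{y\in\F_{p^n}:y^q=w\}\cdot\#\{x\in\F_{p^n}:x^p-x=z\}.
\]
It then suffices to show the two factor counts are given respectively by $\sum_\chi\chi_n(w)$ and $\sum_\psi\psi_n(z)$, after which substituting into this display and rearranging the finite sums yields the claimed formula.

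For the multiplicative factor, I would first record that since $q\mid p-1$ (and hence $q\mid p^n-1$), there are exactly $q$ multiplicative characters of $\F_p^\times$ of order dividing $q$. Because $\norm_{\F_{p^n}/\F_p}$ is surjective onto $\F_p^\times$, the assignment $\chi\mapsto\chi_n=\chi\circ\norm_{\F_{p^n}/\F_p}$ is injective, so the $\chi_n$ form $q$ distinct characters of $\F_{p^n}^\times$, each of order dividing $q$; as $\gcd(q,p^n-1)=q$ there are only $q$ such characters upstairs, so the $\chi_n$ are precisely all of them. The standard root-counting identity then gives $\#\{y\in\F_{p^n}^\times:y^q=w\}=\sum_\chi\chi_n(w)$ for $w\neq0$, while for $w=0$ the stated convention ($\chi_n(0)=0$ for $\chi$ nontrivial and $\chi_n(0)=1$ for $\chi$ trivial) makes the right-hand side equal $1$, matching the single solution $y=0$.

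For the additive factor, I would use that $x\mapsto x^p-x$ is the Artin--Schreier map, an $\F_p$-linear endomorphism of $\F_{p^n}$ with kernel $\F_p$ whose image is exactly the trace-zero hyperplane $\{z:\tr_{\F_{p^n}/\F_p}(z)=0\}$; consequently $\#\{x:x^p-x=z\}$ equals $p$ when $\tr_{\F_{p^n}/\F_p}(z)=0$ and $0$ otherwise. On the other hand, orthogonality of the $p$ additive characters of $\F_p$ gives $\sum_\psi\psi_n(z)=\sum_\psi\psi(\tr_{\F_{p^n}/\F_p}(z))$, which is $p$ if $\tr_{\F_{p^n}/\F_p}(z)=0$ and $0$ otherwise, so the two agree. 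Feeding both factor identities into the displayed sum completes the argument.

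I expect the only genuinely delicate point to be the identification in the multiplicative step: one must verify that passing from characters of $\F_p^\times$ to $\F_{p^n}^\times$ via the norm produces exactly the characters of order dividing $q$ and no others, for which both the surjectivity of the norm and the divisibility $q\mid p-1$ are essential. The Artin--Schreier count and the final rearrangement of finite sums are routine.
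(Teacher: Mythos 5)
Your proof is correct and follows the same route as the paper: decompose the point count over pairs $(w,z)$ with $w-z=a$ and express each fibre count via character sums. The paper simply cites both counting identities as standard, whereas you verify them (the norm-surjectivity argument for the multiplicative characters and the Artin--Schreier trace criterion for the additive ones), correctly noting that the standing hypothesis $q\mid p-1$ is what makes the multiplicative identification work.
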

\begin{proof}
	For each $z,w\in\F_{p^n}$ we count the number of points on $C$ satisfying $x^p-x=z$ and $y^q=w$. For such a solution to exist we need to have $w-z=a$, so we only need to consider pairs $z,w$ satisfying this. For each such pair it is sufficient to count the number of solutions $x,y$ to $x^p-x=z,y^q=w$.
	
	It is standard that for any $z,w$ the number of solutions to $y^q=w$ in $\F_{p^n}$ is equal to 
	\begin{align*}
		\sum_\chi\chi_n(w),
	\end{align*}
	while the number of solutions to $x^p-x=z$ is equal to
	\begin{align*}
		\sum_\psi\psi_n(z),
	\end{align*}
	where the ranges of the sums are as in the statement of the lemma. Combining all of those observations we get the formula for the number of points in $C(\F_{p^n})$.
\end{proof}

We consider modified Gauss sums defined by
\begin{align*}
	G_a(\psi_n,\chi_n)=\sum_{w-z=a}\psi_n(z)\chi_n(w)
\end{align*}
with the sum over $z,w\in\F_{p^n}$ satisfying $z-w=a$. The previous lemma gives
\begin{align*}
	|C(\F_{p^n})|=1+\sum_\psi\sum_\chi G_a(\psi_n,\chi_n).
\end{align*}
The sums $G_a$ have the following properties:
\begin{lem}
	\label{lem-Hasse-Davenport}
	Let $\psi$ be an additive character of $\F_p$ and $\chi$ a multiplicative character of $\F_p$.
	\begin{itemize}
		\item If $\psi,\chi$ are both trivial, then $G_a(\psi_n,\chi_n)=p^n$.
		\item If exactly one of $\psi,\chi$ is trivial, then $G_a(\psi_n,\chi_n)=0$.
		\item If both $\psi,\chi$ are nontrivial, then $-G_a(\psi_n,\chi_n)=(-G_a(\psi,\chi))^n$.
	\end{itemize}
\end{lem}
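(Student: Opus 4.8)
The plan is to reduce the shifted sum $G_a$ to an ordinary Gauss sum and then quote the classical Hasse--Davenport relation. First I would record the substitution that identifies $G_a$ with a standard Gauss sum twisted by an explicit root of unity. Parametrizing the constraint $w-z=a$ by $w$ (so that $z=w-a$) and using additivity of $\psi_n$ gives
\begin{align*}
	G_a(\psi_n,\chi_n)=\sum_{w\in\F_{p^n}}\psi_n(w-a)\chi_n(w)=\psi_n(-a)\sum_{w\in\F_{p^n}}\psi_n(w)\chi_n(w)=\psi_n(-a)\,\tau(\psi_n,\chi_n),
\end{align*}
where $\tau(\psi_n,\chi_n)=\sum_{w\in\F_{p^n}}\psi_n(w)\chi_n(w)$ denotes the usual Gauss sum over $\F_{p^n}$. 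Because $-a\in\F_p$ and the trace of an element of $\F_p$ into $\F_{p^n}$ is multiplication by $n$, I also note the identity $\psi_n(-a)=\psi(n\cdot(-a))=\psi(-a)^n$; this is what will make the twisting factors line up at the end.

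Next I would dispose of the degenerate cases directly from this reduction. If $\psi$ and $\chi$ are both trivial, then $\psi_n\equiv 1$ and, by the convention $\chi(0)=1$, also $\chi_n\equiv 1$, so $G_a$ simply counts the $p^n$ admissible values of $z$ and equals $p^n$. If exactly one character is trivial, the twist or the multiplicative weight disappears and the sum collapses to a full character sum over the field: taking $\psi$ trivial it becomes $\sum_{w\in\F_{p^n}}\chi_n(w)$, and taking $\chi$ trivial it becomes $\sum_{z\in\F_{p^n}}\psi_n(z)$. In each case surjectivity of the norm map onto $\F_p^\times$ (respectively of the trace map onto $\F_p$) shows that the lifted character $\chi_n$ (respectively $\psi_n$) is still nontrivial, so orthogonality of characters forces the sum to vanish.

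For the main case, with both $\psi$ and $\chi$ nontrivial, I would invoke the Hasse--Davenport lifting relation for ordinary Gauss sums, $-\tau(\psi_n,\chi_n)=(-\tau(\psi,\chi))^n$, which is precisely the content of the computation in \cite[\S11]{IR90}. Combining it with the two identities from the first step yields
\begin{align*}
	-G_a(\psi_n,\chi_n)=\psi_n(-a)\bigl(-\tau(\psi_n,\chi_n)\bigr)=\psi(-a)^n\bigl(-\tau(\psi,\chi)\bigr)^n=\bigl(-\psi(-a)\tau(\psi,\chi)\bigr)^n=\bigl(-G_a(\psi,\chi)\bigr)^n,
\end{align*}
which is the asserted equality. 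The one genuinely nontrivial ingredient is the Hasse--Davenport relation itself; everything else is bookkeeping with the factor $\psi_n(-a)$. The only point I would watch carefully is the consistent use of the two conventions $\chi(0)=1$ (for trivial $\chi$) versus $\chi(0)=0$ (for nontrivial $\chi$) across the cases, so that the reduction to $\tau(\psi_n,\chi_n)$ and the vanishing arguments both evaluate $\chi_n$ correctly at $0$.
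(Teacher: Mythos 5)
Your proof is correct and follows essentially the same route as the paper: the substitution $z=w-a$ extracts the factor $\psi_n(-a)=\psi(-a)^n$, reduces $G_a$ to the ordinary Gauss sum, and the third claim then follows from the classical Hasse--Davenport relation, with the first two cases handled by orthogonality (which the paper dismisses as immediate). Your extra care with the convention $\chi(0)=0$ versus $\chi(0)=1$ is a reasonable elaboration but does not change the argument.
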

\begin{proof}
	The first two claims are immediate. Observe
	\begin{align*}
		G_a(\psi_n,\chi_n) &=\sum_{w-z=a}\psi_n(z)\chi_n(w)=\sum_w\psi_n(w-a)\chi_n(w)\\
		&=\psi_n(-a)\sum_w\psi_n(w)\chi_n(w)=\psi(-a)^nG(\psi_n,\chi_n),
	\end{align*}
	where $G(\psi_n,\chi_n)$ is the usual Gauss sum. The last statement now follows from the usual Hasse-Davenport relations for Gauss sums, see \cite[\S11.4]{IR90}.
\end{proof}
We therefore have
\begin{align*}
	|C(\F_{p^n})|=1+p^n-\sum_{\psi\neq 1}\sum_{\chi\neq 1}(-G_a(\psi,\chi))^n,
\end{align*}
where with $1$ we denote the trivial character (both additive and multiplicative). This gives us an explicit formula for the zeta function of $C$:
\begin{align*}
	Z(C,T)=\frac{\prod_{\psi\neq 1}\prod_{\chi\neq 1}(1+G_a(\psi,\chi)T)}{(1-T)(1-pT)}.
\end{align*}
By \cite[Section 5.4]{P06}, evaluating the numerator at $T=1$ gives us the number of points on the Jacobian of $C$ over $\F_p$, i.e.
\begin{align*}
	|J(\F_p)|=\prod_{\psi\neq 1}\prod_{\chi\neq 1}(1+G_a(\psi,\chi)),
\end{align*}

With this formula we can now establish
\begin{prop}
	\label{thm-q-mid-p-1}
	If $q\mid p-1$, the Jacobian of $C$ has no $q$-torsion over $\F_p$.
\end{prop}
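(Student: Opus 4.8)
The plan is to reduce the statement to a divisibility question about the order of the group $J(\F_p)$. Since $J(\F_p)$ is a finite abelian group, it contains nontrivial $q$-torsion (equivalently, a point of order $q$) if and only if $q$ divides $|J(\F_p)|$. So it suffices to prove $q\nmid|J(\F_p)|$, and for this I would exploit the product formula
\begin{align*}
|J(\F_p)|=\prod_{\psi\neq 1}\prod_{\chi\neq 1}(1+G_a(\psi,\chi))
\end{align*}
established above, analyzing it modulo a suitable prime over $q$.

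The key idea is to pass to the ring $\Z[\zeta_p,\zeta_q]$ and reduce modulo the prime $\q=(1-\zeta_q)$ lying over $q$. The crucial observation is that each multiplicative character $\chi$ of order dividing $q$ takes its nonzero values in the group $\mu_q$ of $q$-th roots of unity, and every such root is congruent to $1$ modulo $\q$; thus $\chi(w)\equiv 1\pmod\q$ for $w\neq 0$ while $\chi(0)=0$. Substituting this into $G_a(\psi,\chi)=\sum_{w}\psi(w-a)\chi(w)$ for nontrivial $\psi,\chi$, I would obtain
\begin{align*}
G_a(\psi,\chi)\equiv\sum_{w\neq 0}\psi(w-a)\pmod\q.
\end{align*}
Since $\sum_{w\in\F_p}\psi(w-a)=0$ for nontrivial $\psi$, restoring the missing term $w=0$ gives $G_a(\psi,\chi)\equiv-\psi(-a)\pmod\q$, and hence $1+G_a(\psi,\chi)\equiv 1-\psi(-a)\pmod\q$.

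It then remains to assemble the product. As $q\mid p-1$ there are exactly $q-1$ nontrivial characters $\chi$ of order dividing $q$, so the inner product contributes $(1-\psi(-a))^{q-1}$ for each fixed $\psi$. Using that $-a\in\F_p^\times$, the values $\psi(-a)$ for nontrivial $\psi$ run over all nontrivial $p$-th roots of unity, so
\begin{align*}
\prod_{\psi\neq 1}(1-\psi(-a))=\prod_{\zeta^p=1,\,\zeta\neq 1}(1-\zeta)=p.
\end{align*}
Therefore $|J(\F_p)|\equiv p^{q-1}\pmod\q$. Since $|J(\F_p)|$ and $p$ are rational integers and $\q$ lies over $q$, this congruence descends to one modulo $q$, and Fermat's little theorem gives $p^{q-1}\equiv 1\pmod q$. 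Hence $|J(\F_p)|\equiv 1\pmod q$, so $q\nmid|J(\F_p)|$ and $J$ has no $q$-torsion over $\F_p$.

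The main obstacle I anticipate is making the reduction step fully rigorous: one must choose the prime $\q$ correctly (noting $q\neq p$, so $q$ is unramified in $\Q(\zeta_p)$ but totally ramified in $\Q(\zeta_q)$ with $\q=(1-\zeta_q)$) and check that the Gauss sums $G_a(\psi,\chi)$, and with them $|J(\F_p)|$, genuinely lie in $\Z[\zeta_p,\zeta_q]$ so that reduction modulo $\q$ is meaningful. The hypothesis $q\mid p-1$ enters decisively here: it guarantees the existence of the $q-1$ nontrivial characters $\chi$ whose values lie in $\mu_q$, which is precisely what forces the congruence $\chi(w)\equiv 1\pmod\q$ driving the whole computation.
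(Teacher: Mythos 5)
Your proposal is correct and follows essentially the same route as the paper: reduce to showing $q\nmid|J(\F_p)|$, work in $\Z[\zeta_{pq}]$ modulo a prime over $q$ containing $1-\zeta_q$, and use $\chi(w)\equiv 1$ to get $1+G_a(\psi,\chi)\equiv 1-\psi(-a)$. The only (harmless) difference is the finish: you evaluate the full product as $p^{q-1}\equiv 1\pmod q$ via Fermat, while the paper simply notes each factor $1-\psi(-a)$ divides $p$ and so cannot lie in $\q$; also note that $(1-\zeta_q)$ need not itself be prime in $\Z[\zeta_{pq}]$, but since $(1-\zeta_q)\cap\Z=(q)$ your descent argument is unaffected.
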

\begin{proof}
	Let $\zeta_{pq}$ be a primitive $pq$-th root of unity. Note that $G_a(\psi,\chi)\in\Z[\zeta_{pq}]$ for all $a,\psi,\chi$, and that $\Z[\zeta_{pq}]$ is a Dedekind domain. Take any prime ideal factor $\q$ of $q$ in this ring.
	
	We have a classical equality of ideals $(q)=(1-\zeta_q)^{q-1}$ in any ring containing $\zeta_q$, so that necessarily $1-\zeta_q\in\q$. It follows  we have $\chi_n(w)\equiv 1\pmod\q$ for all $w\in\F_{p^n}^\times$, hence for $\psi,\chi\neq 1$ we get
	\begin{align*}
		G_a(\psi,\chi) &\equiv\sum_{z\neq-a}\psi(z)=-\psi(-a)+\sum_z\psi(z)=-\psi(-a)\pmod\q,
	\end{align*}
	thus
	\begin{align*}
		|J(\F_p)| &\equiv\prod_{\psi\neq 1}\prod_{\chi\neq 1}(1-\psi(-a))\pmod\q.
	\end{align*}
	Observe that since $-a\in\F_p^\times$, $\psi(-a)$ is a primitive $p$-th root of unity for every $\psi\neq 1$, so $1-\psi(-a)$ is a factor of $p$ in $\Z[\zeta_{pq}]$. It follows that $1-\psi(-a)\not\in\q$, as otherwise we would find $p\in\q$, which cannot hold as $p,q$ are distinct rational primes. It follows $1-\psi(-a)$ is not in $\q$, hence neither is $|J(\F_p)|$. Consequently $q\nmid |J(\F_p)|$, so $J(\F_p)$ has no $q$-torsion.
\end{proof}

If we now drop the condition that $q$ divides $p-1$ and merely assert that it is different from $p$, we can instead reason using a field extension $\F_{p^k}$ for $k$ such that $q\mid p^k-1$. This way we can give an exact condition for when $q$-torsion exists in $\F_p$:
\begin{thm}
	Let $p,q$ be distinct primes and let $k=\ord_qp$ be the least $k$ such that $q\mid p^k-1$. Then the Jacobian of $C$ has no $q$-torsion over $\F_p$ iff $p\nmid k$.
\end{thm}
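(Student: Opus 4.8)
The plan is to compute $|J(\F_p)|$ modulo $q$ and then invoke the elementary fact that a finite abelian group has an element of order $q$ exactly when $q$ divides its order; so the theorem amounts to showing that $q\mid|J(\F_p)|$ if and only if $p\mid k$. Since $k=\ord_qp$ divides $q-1$ and the case $k=1$ is precisely the case $q\mid p-1$ already handled by Proposition \ref{thm-q-mid-p-1}, the whole point of the argument is to transport that computation to the field $\F_{p^k}$, over which $q\mid p^k-1$ and the Gauss sum formula for $|J|$ is once more available.

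The key structural input I would establish first is the identity $|J(\F_{p^k})|=|J(\F_p)|^k$. Writing $Z(C,T)=P(T)/((1-T)(1-pT))$ with $P(T)=\prod_i(1-\alpha_iT)$, so that $|J(\F_{p^n})|=\prod_i(1-\alpha_i^n)$ for all $n$, I would argue that $P$ is a polynomial in $T^k$. The input is a point count: whenever $k\nmid n$ one has $\gcd(q,p^n-1)=1$, so $y\mapsto y^q$ is a bijection of $\F_{p^n}$, the curve has exactly one affine point above each $x$, and hence $|C(\F_{p^n})|=p^n+1$, forcing $\sum_i\alpha_i^n=0$. With $\zeta_k$ a primitive $k$-th root of unity, this makes the multisets $\{\alpha_i\}$ and $\{\zeta_k\alpha_i\}$ share all power sums, so they coincide; thus $\{\alpha_i\}$ is stable under multiplication by $\zeta_k$. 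Grouping the eigenvalues into $\zeta_k$-orbits and using $\prod_{l=0}^{k-1}(1-\zeta_k^l\beta T)=1-\beta^kT^k$ then shows $P(T)=Q(T^k)$, and comparing $\prod_i(1-\alpha_i)$ with $\prod_i(1-\alpha_i^k)$ orbit by orbit yields $|J(\F_{p^k})|=|J(\F_p)|^k$.

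Because $q$ is prime, $q\mid|J(\F_p)|$ if and only if $q\mid|J(\F_{p^k})|$, so I would finish by computing the latter modulo a prime $\q\mid q$ of $\Z[\zeta_{pq}]$. As $q\mid p^k-1$, the proof of Proposition \ref{thm-q-mid-p-1} runs word for word over $\F_{p^k}$, with the characters of order dividing $q$ now taken on $\F_{p^k}^\times$, giving $|J(\F_{p^k})|=\prod_{\psi\neq1}\prod_{\chi\neq1}(1+G_a(\psi_k,\chi))$. Reducing modulo $\q$ exactly as there (each nontrivial $\chi$ has $\chi(w)\equiv1$ on $\F_{p^k}^\times$) leaves
\begin{align*}
	|J(\F_{p^k})|\equiv\prod_{\psi\neq1}\big(1-\psi_k(-a)\big)^{q-1}\pmod\q.
\end{align*}

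To read off the answer I would use that $\tr_{\F_{p^k}/\F_p}(-a)=-ka$, valid since $-a\in\F_p$, so that $\psi_k(-a)=\psi(-ka)$. If $p\nmid k$ then $-ka\in\F_p^\times$, every $\psi(-ka)$ with $\psi\neq1$ is a primitive $p$-th root of unity, each factor $1-\psi(-ka)$ divides $p$ and so lies outside $\q$, and therefore the product is not in $\q$; hence $q\nmid|J(\F_{p^k})|$. If $p\mid k$ then $-ka=0$ in $\F_p$, every factor vanishes, and $\q$, and hence $q$, divides $|J(\F_{p^k})|$. Combining the two cases gives that $J(\F_p)$ has $q$-torsion exactly when $p\mid k$, as claimed. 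I expect the only real obstacle to be the structural identity $|J(\F_{p^k})|=|J(\F_p)|^k$: once the Frobenius eigenvalues are shown to be stable under scaling by $\zeta_k$, everything else is a faithful rerun of the earlier Gauss sum argument over the extension field.
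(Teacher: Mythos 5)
Your proposal is correct and follows essentially the same route as the paper: the same point-count argument forcing $\sum_i\alpha_i^n=0$ for $k\nmid n$, hence stability of the Frobenius eigenvalues under multiplication by $\zeta_k$ and the identity $|J(\F_{p^k})|=|J(\F_p)|^k$, combined with the Gauss-sum computation of $|J(\F_{p^k})|$ modulo a prime above $q$ and the observation $\psi_k(-a)=\psi(-a)^k$. The only cosmetic difference is that you invoke the norm identity in both directions, whereas the paper handles the $p\nmid k$ case simply by noting $J(\F_p)\subseteq J(\F_{p^k})$; both are fine.
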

\begin{proof}
	We can repeat the reasoning preceding Theorem \ref{thm-q-mid-p-1} and in the proof of the theorem to find
	\begin{align*}
		|J(\F_{p^k})|=\prod_{\psi\neq 1}\prod_{\chi\neq 1}(1-(-G_a(\psi_k,\chi)))\equiv\prod_{\psi\neq 1}\prod_{\chi\neq 1}(1-\psi_k(-a))\pmod\q,
	\end{align*}
	where this time $\psi$ ranges over all additive characters of $\F_p$, while $\chi$ ranges over all multiplicative characters of $\F_{p^k}$ of order dividing $q$. It is clear that $\psi_k(-a)=\psi(-a)^k$, so as long as $p\nmid k$ it is again a primitive root of unity. Hence $J(\F_{p^k})$ has no $q$-torsion, thus neither does its subgroup $J(\F_p)$.
	
	When $p\mid k$ we have $\psi_k(-a)=\psi(-a)^k=1$, so that $|J(\F_{p^k})|\equiv 0\pmod\q$, hence $q\mid |J(\F_{p^k})|$. From the following lemma we also have $q\mid |J(\F_p)|$, so $J(\F_p)$ contains a $q$-torsion point.
\end{proof}
\begin{lem}
	For $p,q,k$ as above we have
	\begin{align*}
		|J(\F_{p^k})|=|J(\F_p)|^k.
	\end{align*}
\end{lem}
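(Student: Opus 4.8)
The plan is to deduce the identity from a multiplicative symmetry of the Frobenius eigenvalues. Write the numerator of $Z(C,T)$ over $\F_p$ as $\prod_{i=1}^{2g}(1-\omega_i T)$, so that the $\omega_i$ are its reciprocal roots. The same computation cited above (\cite[Section 5.4]{P06}), applied over each $\F_{p^n}$ where the reciprocal roots become $\omega_i^n$, gives $|J(\F_{p^n})|=\prod_{i=1}^{2g}(1-\omega_i^n)$; in particular $|J(\F_p)|=\prod_i(1-\omega_i)$ and $|J(\F_{p^k})|=\prod_i(1-\omega_i^k)$. Expanding $\log Z(C,T)$ also records the point counts as $|C(\F_{p^n})|=1+p^n-\sum_i\omega_i^n$, the extra $1$ coming from the unique point at infinity.

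The crucial input is that $|C(\F_{p^n})|$ is as small as possible whenever $k\nmid n$. Indeed $q\mid p^n-1$ holds exactly when $k=\ord_q p$ divides $n$, so for $k\nmid n$ we have $\gcd(q,p^n-1)=1$ and the $q$-power map is a bijection of $\F_{p^n}$. Then $y^q=x^p-x+a$ has a unique solution $y$ for each $x$, whence $|C(\F_{p^n})|=1+p^n$. Comparing with the formula above forces $\sum_i\omega_i^n=0$ for every $n$ not divisible by $k$.

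From the vanishing of these power sums I would extract that the multiset $S=\{\omega_i\}$ is invariant under multiplication by a fixed primitive $k$-th root of unity $\zeta_k$. Setting $S'=\{\zeta_k\omega_i\}$, its $n$-th power sum is $\zeta_k^n\sum_i\omega_i^n$, which agrees with $\sum_i\omega_i^n$ when $k\mid n$ (as $\zeta_k^n=1$) and equals $0=\sum_i\omega_i^n$ when $k\nmid n$. Thus $S$ and $S'$ have identical power sums of all orders; since both have $2g$ elements, Newton's identities over a field of characteristic zero give $S=S'$, that is, $\zeta_k S=S$.

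This invariance closes the argument: using $1-\omega_i^k=\prod_{j=0}^{k-1}(1-\zeta_k^j\omega_i)$,
\[
|J(\F_{p^k})|=\prod_i(1-\omega_i^k)=\prod_{j=0}^{k-1}\prod_i(1-\zeta_k^j\omega_i)=\prod_{j=0}^{k-1}\prod_i(1-\omega_i)=|J(\F_p)|^k,
\]
where the third equality uses $\{\zeta_k^j\omega_i\}_i=\{\omega_i\}_i$ for each $j$. The only genuinely delicate step is passing from vanishing power sums to $\zeta_k$-invariance, but this is routine once the point count $|C(\F_{p^n})|=1+p^n$ for $k\nmid n$ is in hand; everything else is bookkeeping with the zeta function. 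As a sanity check, when $q\mid p-1$ one has $k=1$ and the statement is trivial, consistent with the proof of the preceding theorem.
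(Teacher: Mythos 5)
Your proof is correct and follows essentially the same route as the paper: both derive $\sum_i\omega_i^n=0$ for $k\nmid n$ from the bijectivity of the $q$-power map, deduce that the multiset of Frobenius eigenvalues is stable under multiplication by $\zeta_k$, and conclude via $1-\omega_i^k=\prod_{j=0}^{k-1}(1-\zeta_k^j\omega_i)$. The only cosmetic difference is that you invoke Newton's identities to pass from equal power sums to equality of multisets, where the paper cites the uniqueness clause of the Weil conjectures.
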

\begin{proof}
	From the Weil conjectures it follows there is a unique (up to ordering) collection of $2g$ numbers $\alpha_1,\dots,\alpha_{2g}$ such that, for all $n$,
	\begin{align*}
		p^n+1-C(\F_{p^n})=\sum_{i=1}^{2g}\alpha_i^n.
	\end{align*}
	But for $k\nmid n$ we have $q$ and $p^n-1$ relatively prime, so that $y\mapsto y^q$ is a bijection on $\F_{p^n}$. It follows that $C(\F_{p^n})=p^n+1$, hence
	\begin{align}
		\label{eq-zero}
		\sum_{i=1}^{2g}\alpha_i^n=0.
	\end{align}
	Note that if we multiply each of $\alpha_1,\dots,\alpha_{2g}$ by $\zeta_k^j$, where $\zeta_k$ is a primitive $k$-th root of unity and $j$ is arbitrary, we get
	\begin{align*}
		\sum_{i=1}^{2g}(\zeta_k^j\alpha_i)^n=\zeta_k^{nj}\sum_{i=1}^{2g}\alpha_i^n=\sum_{i=1}^{2g}\alpha_i^n
	\end{align*}
	for all $n$---if $k\nmid n$ this follows from \eqref{eq-zero}, while if $k\mid n$ it is immediate. The uniqueness statement above leads us to a conclusion that $\alpha_i\mapsto\zeta_k^j\alpha_i$ is a permutation for any $j$. Using again \cite[Section 5.4]{P06},
	\begin{align*}
		|J(\F_p)|^k &=\left(\prod_{i=1}^{2g}(1-\alpha_i)\right)^k=\prod_{i=1}^{2g}\prod_{j=0}^{k-1}(1-\zeta_k^j\alpha_i)=\prod_{i=1}^{2g}(1-\alpha_i^k)=|J(\F_{p^k})|.\qedhere
	\end{align*}
\end{proof}

\begin{rem}
	As pointed out by a referee, essentially the same argument shows that $|J(\F_{p^{k'}})|=|J(\F_p)|^{k'}$ for any $k'\mid k$. The lemma holds more generally for any superelliptic curve $y^q=F(x)$ with $F\in\F_p[x]$ of degree not divisible by $q$. Further, numerical evidence suggests that the following holds:
\end{rem}
\begin{conj}
	Consider a superelliptic curve $C$ given by $y^q=F(x)$ with $q$ prime, $F\in\F_p[x]$ of degree not divisible by $q$ and set $k=\ord_qp$. There exists an isomorphism
	\begin{align*}
		J(\F_{p^k})\cong J(\F_p)^k
	\end{align*}
	of abstract groups.
\end{conj}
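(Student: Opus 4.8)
The plan is to recast the conjecture as a purely module-theoretic statement about the finite abelian group $M=J(\F_{p^k})$ over a single finite ring. The curve $C$ carries the order-$q$ automorphism $\sigma\colon(x,y)\mapsto(x,\zeta_q y)$ with $\zeta_q\in\F_{p^k}$, and because the quotient $C/\langle\sigma\rangle$ is rational, the induced map $\sigma_*$ on $J$ satisfies $1+\sigma_*+\dots+\sigma_*^{q-1}=0$, so $J$ becomes a module over $\Z[\zeta_q]$. Writing $\varphi$ for the arithmetic Frobenius of $\F_p$, the Galois action on $\mu_q$ gives the commutation relation $\varphi\sigma_*\varphi^{-1}=\sigma_*^p$. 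Since $k=\ord_q p$, the operator $\varphi^k$ commutes with $\sigma_*$, so $M=J(\F_{p^k})$ is a module over the finite crossed-product ring $\Lambda=\Z[\zeta_q]\langle\varphi\rangle/(\varphi^k-1,\ \varphi\sigma_*-\sigma_*^p\varphi)$, the skew group ring of $\Z[\zeta_q]$ with the cyclic group $\langle\varphi\rangle$ of order $k$. Moreover $J(\F_p)=M^\varphi=\ker(\varphi-1)$ is the $\varphi$-fixed submodule, by Galois descent. The conjecture is thus equivalent to the assertion $M\cong(M^\varphi)^k$ as abelian groups.

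The key structural input is that $\sigma_*\mapsto\sigma_*^p$ realizes $\langle\varphi\rangle$ as the decomposition group of $p$ inside $\mathrm{Gal}(\Q(\zeta_q)/\Q)$, of order exactly $k$. Hence the crossed product has trivial factor set, and the classical identification of $(L/E,\mathrm{Gal},1)$ with $\operatorname{End}_E(L)$ gives $\Lambda\otimes\Q\cong M_k(E)$, where $E=\Q(\zeta_q)^{\langle\varphi\rangle}$ is the fixed field, of degree $(q-1)/k$ over $\Q$. Under this identification, and using a normal basis of $\Q(\zeta_q)/E$, the element $\varphi$ corresponds to the cyclic permutation matrix. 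Whenever the matrix-algebra picture holds integrally this is exactly the situation of the toy computation: Morita equivalence gives $M\cong(e_{11}M)^k$ with $\varphi$ cyclically permuting the $k$ blocks, whence $M^\varphi\cong e_{11}M$ and $M\cong(M^\varphi)^k$. I would first carry this out for the primes $\ell\nmid pq$, where $\Z_\ell[\zeta_q]$ is étale over $\Z_\ell$, the local order $\Lambda\otimes\Z_\ell$ is maximal, and $M[\ell^\infty]$ is automatically locally free over it; the argument then yields $M[\ell^\infty]\cong(M^\varphi[\ell^\infty])^k$.

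The remaining primes are the crux. At $\ell=q$ the ring $\Z[\zeta_q]$ is ramified, so the local order is no longer maximal and one must verify directly that $M[q^\infty]$ is still projective (or at least cohomologically trivial) over it; I expect this to be tractable using the tameness of the ramification and the control on $q$-torsion already visible in the proof of Proposition~\ref{thm-q-mid-p-1}. The genuine obstacle is the $p$-part $M[p^\infty]$: here the geometric Frobenius is inseparable and not invertible on the $p$-adic realization, so the Tate-module and Morita arguments break down and must be replaced by covariant Dieudonné theory. One would work with the Dieudonné module of $J[p^\infty]$ over $\F_{p^k}$, carrying simultaneously its $F,V$ operators, the $\Z_p[\zeta_q]$-action, and the semilinear descent data for $\F_{p^k}/\F_p$, and recover $M[p^\infty]$ and $M^\varphi[p^\infty]$ as the appropriate cokernels. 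The main difficulty is to show that the cyclic-permutation structure persists at the level of Dieudonné modules, i.e.\ that $J[p^\infty]$ is, as a module over the crossed-product order, of the induced/locally free shape forcing $M[p^\infty]\cong(M^\varphi[p^\infty])^k$. Once every primary part is handled, the local isomorphisms assemble into the desired group isomorphism, a finite abelian group being the direct sum of its primary components. It is worth stressing that the cardinality identity $|M|=|M^\varphi|^k$ of the preceding lemma already supplies the numerical shadow of all these statements, so the entire content of the conjecture is the upgrade from orders to group structure, which is exactly what the projectivity/induced-module input must provide.
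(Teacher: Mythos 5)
First, a point of calibration: the paper does not prove this statement. It is explicitly labelled a conjecture, supported only by numerical evidence; the only thing actually established in the paper is the cardinality identity $|J(\F_{p^k})|=|J(\F_p)|^k$ of the preceding lemma, proved by showing that multiplication by $\zeta_k^j$ permutes the Frobenius eigenvalues. So there is no proof of the paper's to compare yours against, and your text has to stand on its own as an attempt to settle an open question.

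Judged that way, it is a reasonable research plan but not a proof, and the gaps sit exactly where the content of the conjecture lies. The skeleton is sound as far as it goes: the relation $1+\sigma_*+\dots+\sigma_*^{q-1}=0$, the commutation $\varphi\sigma_*\varphi^{-1}=\sigma_*^p$, the identification of $\Lambda\otimes\Q$ with $M_k(E)$ via the trivial factor set, and (after invoking Noether's local normal integral basis at the unramified primes, which you should make explicit, since it is what puts $\varphi$ in permutation form integrally) the Morita argument does give $M[\ell^\infty]\cong(M^\varphi[\ell^\infty])^k$ for $\ell\nmid pq$. But at $\ell=q$ the order $\Z_q[\zeta_q]\rtimes\langle\varphi\rangle$ is at best hereditary, not maximal, and hereditary orders in $M_k(E_q)$ have several non-isomorphic indecomposable lattices, so finite modules over them are not automatically of the shape $N^k$; ``I expect this to be tractable'' is not an argument, and the $q$-torsion results of Section 3 only cover the special curves $y^q=x^p-x+a$ (and even there $q$-torsion genuinely occurs when $p\mid k$), not a general $F$. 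At $\ell=p$ you correctly diagnose that one must pass to the unit-root part of $J[p^\infty]$ (equivalently a Frobenius-semilinear module over the Witt vectors), but the assertion that the resulting object is of induced, locally free shape over the crossed-product order is precisely the open point; nothing in the sketch forces it, and the semilinearity of $\varphi$ on the local factors of $T_pJ$ over $\Z_p[\zeta_q]$ is an additional structure your Morita picture does not yet accommodate. Until both local statements are supplied, what you have is a proof at the primes away from $p$ and $q$ together with a restatement of the difficulty at the remaining two, and the conjecture stays open.
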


\begin{rem}
	The previous two theorems still hold when we take $C$ to be defined by an equation $y^{q^l}=x^p-x+a$. The proofs are analogous using the fact $1-\zeta_{q^l}\in\q$ for any prime $\q$ of $\Z[\zeta_{pq^l}]$ containing $q$. We get that $q\mid|J(\F_p)|$ iff $p\mid\ord_qp$.
\end{rem}

\section{Applications to bounding ranks}
\label{sec-apps}

We now apply the established results to reductions of certain curves over the rationals to bound their Mordell-Weil ranks from below. Specifically, consider curves defined by
\begin{align*}
	C:y^m=(x-a_1)\dots(x-a_r)+k^m,
\end{align*}
where $a_1,\dots,a_r$ and $k$ are integers and $m,r$ are relatively prime. We take the points $P_i=(a_i,k)\in C$, their images $D_i=P_i-\infty\in J(C)$ under the Albanese map and the subgroup $\Gamma$ those images generate. It is clearly a subgroup of $J(C)(\Q)$. The following result is implicit in \cite{JT99} for $m=2$.
\begin{prop}
	\label{prop-application}
	Assume there is a prime $p$ such that:
	\begin{enumerate}
		\item $m$ is not divisible by $p$,
		\item $k$ is divisible by $p$,
		\item the $a_i$ are pairwise incongruent modulo $p$.
	\end{enumerate}
	Suppose further $\Gamma$ contains no nontrivial $m$-torsion. Then $\Gamma$ is free of rank $r-1$.
\end{prop}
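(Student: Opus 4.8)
The plan is to present $\Gamma$ as the image of an explicit surjection from $\Z^{r-1}$ and then prove that this surjection is injective by reducing modulo $p$ and descending, the key inputs being Remark \ref{rem-r1} applied to the special fibre together with the assumed absence of nontrivial $m$-torsion.

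First I would record the one obvious relation among the generators. Since $\gcd(m,r)=1$ there is a single point at infinity, at which $y-k$ has a pole of order $r$; its zeros are exactly the $P_i=(a_i,k)$, each a simple zero (as $k\neq 0$, the point $P_i$ is unramified over $x=a_i$, so $x-a_i$ is a local parameter and $\prod_j(x-a_j)$ vanishes simply there). Hence $\div(y-k)=\sum_{i=1}^r P_i-r\infty$, which gives $\sum_{i=1}^r D_i=0$ in $J(C)$. Consequently $\Gamma$ is generated by $D_1,\dots,D_{r-1}$, and we obtain a surjection $\phi\colon\Z^{r-1}\to\Gamma$, $(c_1,\dots,c_{r-1})\mapsto\sum_{i=1}^{r-1}c_iD_i$. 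Proving $\phi$ injective then yields freeness and rank exactly $r-1$ simultaneously.

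Next I would set up reduction at $p$. By hypothesis (3) the reduction $\bar F(x)=\prod_i(x-\bar a_i)$ is separable, and since $F$ is monic we have $\operatorname{disc}(F)\equiv\operatorname{disc}(\bar F)\not\equiv 0\pmod p$; in particular $F$ itself is separable, and as $p\nmid m$ by (1), the curve $C$ acquires good reduction at $p$ with special fibre the superelliptic curve $\tilde C\colon y^m=\prod_i(x-\bar a_i)$. Because $p\mid k$ by (2), each $P_i=(a_i,k)$ reduces to the branch point $R_i=(\bar a_i,0)$, while $\infty$ reduces to $\infty$. The reduction homomorphism $J(C)(\Q)\hookrightarrow J(C)(\Q_p)\to J(\tilde C)(\F_p)$ therefore carries $D_i=P_i-\infty$ to $R_i-\infty$, which are precisely the generators $D_i'$ of Remark \ref{rem-r1}. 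Now I would run the descent: if $(c_1,\dots,c_{r-1})\in\ker\phi$, then reducing the relation $\sum c_iD_i=0$ gives $\sum_{i=1}^{r-1}c_i D_i'=0$ in $J(\tilde C)$, so Remark \ref{rem-r1} forces $m\mid c_i$ for all $i$. Writing $c_i=mc_i'$, the original relation becomes $m\sum c_i'D_i=0$, so $\sum c_i'D_i$ is an $m$-torsion element of $\Gamma$ and hence vanishes by hypothesis; thus $(c_1',\dots,c_{r-1}')\in\ker\phi$ as well. Iterating shows $m^j\mid c_i$ for every $j$, whence $c_i=0$, so $\phi$ is an isomorphism and $\Gamma\cong\Z^{r-1}$.

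I expect the main obstacle to lie in the middle step: establishing good reduction and, more delicately, verifying that the reduction map sends the classes $D_i$ exactly to the torsion-generating classes $R_i-\infty$ on $\tilde C$, which is what licenses the use of Remark \ref{rem-r1}. Once this is in place the descent is short, its only essential ingredients being that remark and the assumption that $\Gamma$ has no nontrivial $m$-torsion.
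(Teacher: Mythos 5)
Your proposal is correct and takes essentially the same route as the paper: reduce modulo $p$ to the curve $y^m=\prod_i(x-\bar a_i)$, identify the reductions of the $D_i$ with the points $D_i'$ of Remark \ref{rem-r1} to conclude $m$ divides every coefficient, then divide the relation by $m$ using the no-$m$-torsion hypothesis. The only cosmetic difference is that you iterate this division to get $m^j\mid c_i$ for all $j$, whereas the paper phrases the same descent as a minimal-counterexample argument.
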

\begin{proof}
	The sum of all $D_i$ is equal to $\div(y-k)$, so $\Gamma$ is generated by $D_1,\dots,D_{r-1}$. It is enough to show there is no relation between those points.
	
	Since $p\mid k$, the reduction of the equation of $C$ modulo $p$ is
	\begin{align*}
		\widetilde C:y^m=(x-a_1)\dots(x-a_r).
	\end{align*}
	We have $p\nmid m$ by the first assumption, and the right-hand side is separable by the third, from which we deduce $C$ has good reduction modulo $p$. It follows $J(C)$ also has good reduction (see \cite[Corollary VII.12.3]{CS86}), which induces a reduction homomorphism $J(C)(\Q)\to J(\widetilde C)(\F_p)$.
	
	Suppose there is a nontrivial relation between the points $D_1,\dots,D_{r-1}$, say
	\begin{align*}
		\sum_{i=1}^{r-1}a_iD_i=0
	\end{align*}
	with not all $a_i$ zero. Reordering the points and changing the sign if necessary, we may assume $a_1>0$ and $a_1$ is the smallest possible among all such relations. The relation is preserved by the reduction homomorphism, so
	\begin{align*}
		\sum_{i=1}^{r-1}a_i\widetilde D_i=0,
	\end{align*}
	where $\widetilde D_i$ is the image of $D_i$ under reduction. But the reduction of the point $P_i=(a_i,k)$ is $(\widetilde{a_i},0)$ (as we assumed $p\mid k$), so the points $\widetilde D_i$ coincide with the points $D_i'$ considered in Remark \ref{rem-r1}. This gives rise to a vanishing linear combination of the $D_i'$, which by the remark implies the coefficients are all divisible by $m$. Writing $a_i=mb_i$ we have $b_1<a_1$, so, by minimality of $a_1$, the point $D=\sum_{i=1}^{r-1}b_iD_i\in J(C)(\Q)$ is nonzero. However, the original relation gives us $mD=0$, which contradicts the assumption that $\Gamma$ has no $m$-torsion.
\end{proof}
\begin{rem}
	Suitable versions of this proposition also hold for curves satisfying $\gcd(m,r)>1$, as well as for curves over arbitrary number fields in place of $\Q$, and lead to more general variants of Theorem \ref{thm-app}
\end{rem}

\begin{proof}[Proof of Theorem \ref{thm-app}]
	Let $C$ be a curve as in the statement of the theorem. First we show $C$ has no $q$-torsion over $\Q$. The reduction of the equation of this curve modulo $p$ is 
	\begin{align*}
		\widetilde C:y^q=x^p-x+k^q
	\end{align*}
	and, as we have noted before, the right-hand side is separable, so as before the curve and its Jacobian have good reduction and we get a reduction homomorphism $J(C)(\Q_p)\to J(\widetilde C)(\F_p)$.
	
	The kernel of this reduction homomorphism is isomorphic to a group associated to a $g$-parameter formal group over $\Z_p$ (see \cite[\S C.2]{HS00}). By \cite[Theorems II.9.3 and II.9.4]{S65} this group has no torsion, so all torsion of $J(C)(\Q_p)$ survives into $J(\widetilde C)(\F_p)$. But Theorem \ref{thm-q-mid-p-1} tells us this last group has no $q$-torsion, therefore neither does $J(C)(\Q_p)$ nor $J(C)(\Q)$. We are now done by the previous proposition.
\end{proof}
Using Dirichlet's theorem on primes in arithmetic progressions, this gives us, for any fixed $q$, families of superelliptic curves whose Jacobians have Mordell-Weil ranks that grow at least linearly with the genus of the curve.

\begin{rem}
	The methods described above can be applied to curves which have the form $y^q=x^p+a$ after reduction modulo some prime $\ell\equiv 1\pmod{pq}$. Torsion on Jacobians of such curves is studied at length in \cite{J14}, see for instance Lemma 4 of that paper, where one can find a different proof of the analogue of Theorem \ref{thm-q-mid-p-1} for the case $q=3$.
\end{rem}

\bibliographystyle{plain}
\bibliography{Torsion}

\end{document}